\numberwithin{equation}{section}
\theoremstyle{plain}
\newtheorem{Th}{Theorem}[section]
\newtheorem{Lemma}[Th]{Lemma}
\theoremstyle{definition}
\newtheorem{Def}[Th]{Definition}
\newtheorem{Rem}[Th]{Remark}
\newtheorem{?}[Th]{Problem}
\newtheorem{Ex}[Th]{Example}
\begin{document}

\title{Symplectic Matroids, Circuits, and Signed Graphs}

\author{Zhexiu Tu}

\address{University of the South\\ Department of Math and Computer Science\\
Sewanee, TN37355} 

\email{zhtu@sewanee.edu}

\subjclass[2000]{Primary 05B35; Secondary 05E15; 20F55; 05C25}

\keywords{Symplectic Matroids; Circuit Axiomatization, Signed Graphs}

\begin{abstract} One generalization of ordinary matroids is symplectic matroids. While symplectic matroids were initially defined by their collections of bases, there has been no cryptomorphic definition of symplectic matroids in terms of circuits. We give a definition of symplectic matroids by collections of circuits. As an application, we construct a class of examples of symplectic matroids from graphs in terms of circuits. 
\end{abstract}

\maketitle

\section{Introduction}

A matroid is a combinatorial structure that generalizes the notion of linear independence in vector spaces. There are many textbooks on this subject. We refer the readers to \cite{oxley} for more background on matroids. There are different cryptomorphic characterizations of matroids, for example, in terms of bases, circuits, flats, etc.. Below we list a matroid definition in terms of circuits.

\begin{Def}
A finite \textit{matroid} $M$ is a pair $(E,\mathcal{C})$, where $E$ is a finite set (called the ground set) and $\mathcal {C}$ is a family of subsets of $E$ (called the circuits) with the following properties:
\begin{enumerate}
\item[(C1)] $\emptyset \notin \mathcal{C}$.
\item[(C2)] $C_1 , C_2 \in \mathcal{C}$ and $C_2 \subseteq C_1$ implies $C_2 = C_1$.
\item[(C3)] $C_1, C_2 \in \mathcal{C}$ with $C_1 \neq C_2$ and $e \in C_1 \cap C_2$ implies there exists some $C_3 \in \mathcal{C}$ such that $C_3 \subset (C_1 \cup C_2 ) - \{ e \}$.
\end{enumerate}
\end{Def}

We refer to matroids as \textit{ordinary matroids}, to distinguish them from different generalizations of matroids, such as \textit{symplectic matroids}. Symplectic matroids are obtained when we replace the symmetric group with the \textit{hyperoctahedral group}, a  group of symmetries of the $n$-cube $[-1, 1]^n$. Geometrically, symplectic matroids are related to the vector spaces endowed with bilinear forms, although in a way different from the way ordinary matroids are related to vector spaces. Symplectic matroids are a generalization of the following matroids that are all equivalent, $\Delta$-matroids \cite{bouch}, metroids \cite{bouch}, or 2-matroids \cite{bouch}. 

Symplectic matroids were defined in \cite{borov} by Borovik, Gelfand and White using the maximality property of bases. In 2003, T. Chow defined symplectic matroids in terms of independent sets, and proved the equivalence between the two definitions in \cite{chow}. In \cite{chow}, Chow posed a  complicated exchange property on independent sets, and proposed a conjectural exchange property on the collection of bases. We know of no progress toward defining symplectic matroids using any other axiomatizations similar to those of ordinary matroids.

In \cite{borov}, a special type of symplectic matroids, called \textit{Lagrangian matroids}, which turn out to be equivalent to $\Delta$-matroids, was studied. Borovik, Gelfand and White provided the circuit axiomatizations of Lagrangian matroids and proved the equivalence between the definitions. However, Lagrangian matroids are just a special case of all symplectic matroids. At present there are no circuit axiomatizations of symplectic matroids.

In this paper, we define symplectic matroids in terms of circuits. Some of these axioms resemble circuit axioms for ordinary matroids, including the circuit elimination axiom. We prove the equivalence between our definition and the definition by Borovik et al in \cite{borov}.  As an application of this result, we show how every finite undirected multigraph gives rise to a symplectic matroid in terms of circuits. 

It is worth mentioning that in \cite{borov} a special type of symplectic matroids, called the \textit{Lagrangian matroids}, was studied. Borovik, Gelfand and White provided and proved the equivalence between the definitions of Lagrangian matroids in terms of bases and circuits. Lagrangian matroids are a class of Coxeter matroids where we let $W = BC_n \cong S_2 \wr S_n$ and $P = S_n$. Hence they are the symplectic matroids where the size of each basis matches the size of the symplectic ground set. In other words, they are the full-rank symplectic matroids. Lagrange matroids are in the names of symmetric matroids \cite{bouch30}, $\Delta$-matroids \cite{bouch30}, metroids \cite{bouch35}, or 2-matroids \cite{bouch34}. 

The structure of this paper is as follows. In Section~\ref{sec2}, we give basic definitions and terms that we will use in our proofs. In Section~\ref{sec3}, we give an alternative definition or axiomatization of a symplectic matroid in terms of circuits (Theorem~\ref{thm:main}), which is our main theorem. In Section~\ref{sec4}, we show that symplectic matroids always satisfy the circuit axioms that we have defined in Section~\ref{sec3}. In Section~\ref{sec5}, we go backwards and show that out circuit axioms guarantee symplectic matroids. It then suffices to prove Theorem~\ref{thm:main}. In Section~\ref{sec6}, we apply Theorem~\ref{thm:main} and construct a class of examples of symplectic matroids from graphs in terms of circuits.

\section{Background and definitions}
\label{sec2}
In this section we give the basic definitions of symplectic matroids. Let 
\[ [n] = \{1,2,\ldots,n \} \textrm{ and }  [n]^* = \{1^*,2^*,\ldots,n^* \} \]
where the map $*: [n] \to [n]^*$ is defined by $i \mapsto i^*$ and $*: [n]^* \to [n]$ is defined by $i^* \mapsto i$. We apply $*$ to sets and collections of sets, for example $C^*$ and $\mathcal{C}^*$. Let 
\[ E_{\pm n} : =  [n] \cup [n]^*\]
be the new ground set. Thus $i^{**} = i$ signifies that $i$ is an involutive permutation of $E_{\pm n}$. That is why sometimes we write $i^*$ as $-i$ and $E_{\pm n}$ can be thought of as a set equivalent to $\{ -n, -(n-1) , \ldots, -1 , 1, 2, \ldots , n \}$. We say a set $S$ is \textit{admissible} if $S \cap S^* = \emptyset$. A permutation $\omega$ of $E_{\pm n}$ is \textit{admissible} if $\omega (x^*) = \omega (x)^*$ for all $x \in E_{\pm n}$. An ordering $<$ on $E_{\pm n}$ is \textit{admissible} if and only if $<$ is a linear ordering and from $i < j$ it follows that $j^* < i^*$. Denote by $E_k$ the collection of all admissible $k$-subsets in $E_{\pm n}$, for $k < 2n$. If $<$ is an arbitrary linear ordering on $E_{\pm n}$, it induces the partial ordering (which we also denote by the same symbol $<$) on $E_k$: if $A, B \in E_k$ and
\[ A := \{ a_1 < a_2 < \ldots < a_k \}  \textrm{ and } B := \{b_1 < b_2 < \ldots < b_k \},\] 
we set $A \leq B$ if
\[ a_1 \leq b_1 , \,\, a_2 \leq b_2, \,\, \ldots , \,\, a_k \leq b_k. \]

We can visualize an admissible ordering as a signed permutation $\sigma$ of $[n]$ followed by the negative of the reversal of $\sigma$. For example, when $n = 3$
\[ 1 < 3 < 2^* < 2 < 3^* < 1^* \]
is one admissible ordering. 

\begin{Def}
If $\mathcal{B}$ is a non-empty family
of equi-numerous admissible subsets of $E_{\pm n}$ with the property that for every admissible ordering $<$ of $E_{\pm n}$, the collection $\mathcal{B}$ always contains a unique maximal element, then $M = (E_{\pm n}; \mathcal{B})$ is a symplectic matroid, and $\mathcal{B}$ is called the collection of \textit{bases} of $M$.
\end{Def}

Below is an example of a non-symplectic matroid.

\begin{Ex}
Let $n = 3$ and $k = 2$, and let $\mathcal{B} = \{12, 2^*3, 13 \}$, where we use our abbreviated notation by listing $\{a, b\}$ as $ab$. Consider the admissible ordering $1 < 3 < 2^* < 2 < 3^* < 1^*$. Then $12$ and $2^* 3$ are incomparable in the induced ordering on $E_2$, and both are larger than $13$, hence $\mathcal{B}$ cannot be a symplectic matroid.
\end{Ex}

\section{Circuits}
\label{sec3}

Let $M = (E_{\pm n}; \mathcal{B})$ be a symplectic matroid, where $\mathcal{B}$ is the collection of bases of $M$. Let $\mathcal{C}$ be the collection of minimal admissible subsets of $E_{\pm n}$ not contained in any member of $\mathcal{B}$. That collection of subsets $\mathcal{C}$ is called the collection of \textit{circuits} of $M$. An admissible set containing no circuits as its subset is called an \textit{independent set}. Otherwise, it is \textit{dependent}.

We let $A \Delta B$ be the \textit{symmetric difference} between two sets $A$ and $B$ defined by $A \Delta B = A \cup B - A \cap B$. We give an important definition of the term \textit{span}.
\begin{Def}
Let $\mathcal{C}$ be a collection of admissible subsets of $E_{\pm n}$. Then an admissible set $P$ spans $x \in E_{\pm n}$ if there exist some $J \in \mathcal{C}$ such that $J - P = \{x\}$.
\end{Def}

A characterization of $\mathcal{C}$ could be used as an alternative definition or axiomatization of
a symplectic matroid. This is precisely what the following theorem provides, followed by an example.

\begin{restatable}{thm}{main}
\label{thm:main}
Let $\mathcal{B}$ be the collection of bases of a symplectic matroid. Let $\mathcal{C}$ be the collection of minimal admissible subsets of $E_{\pm n}$ not contained in any member of $\mathcal{B}$.  Then $\mathcal{C}$ satisfies the following four properties.
\begin{enumerate}
\item[(SC1)] $\emptyset \notin \mathcal{C}$.
\item[(SC2)] If $C_1,C_2 \in \mathcal{C}$ with $C_1 \subseteq C_2$, then $C_1 = C_2$.
\item[(SC3)] If $C_1,C_2 \in \mathcal{C}$ with $C_1 \neq C_2$, $x \in C_1 \cap C_2$ and $C_1 \cup C_2$ is admissible, then there exists some $C \in \mathcal{C}$ with $C \subseteq (C_1 \cup C_2)-\{x\}$.
\item[(SC4)] Let $P$ be an admissible subset of $E_{\pm n}$ and $B \in \mathcal{B}$. If $|P| < |B|$,  $P$ does not span $E_{\pm n}-P \cup P^*$.
\end{enumerate}
Conversely, let $\mathcal{C}$ be a collection of admissible subsets of $E_{\pm n}$, and let $\mathcal{B}$ be the collection of maximal admissible subsets of $E_{\pm n}$ not containing members of $\mathcal{C}$.  If $\mathcal{C}$ satisfies (SC1) - (SC4), then $\mathcal{B}$ is the collection of bases of a symplectic matroid.
\end{restatable}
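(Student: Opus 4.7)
The plan is to handle the two directions of the theorem separately. For the forward direction, axioms (SC1) and (SC2) are immediate from how $\mathcal{C}$ is defined: $\emptyset$ is an admissible subset of every basis so $\emptyset \notin \mathcal{C}$, and the minimality clause in the definition of $\mathcal{C}$ gives (SC2) directly. The substance lies in verifying (SC3) and (SC4).

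For (SC3), I would assume toward contradiction that $(C_1 \cup C_2) - \{x\}$ contains no member of $\mathcal{C}$, so it is independent and hence a subset of some basis $B$; since $C_i \not\subseteq B$ while $C_i - \{x\} \subseteq B$, it follows that $x \notin B$. The strategy is then to construct an admissible linear ordering $<$ on $E_{\pm n}$ that ranks the elements of $(C_1 \cup C_2) - \{x\}$ near the top (and their $*$-images symmetrically near the bottom), so that the unique $<$-maximum basis is forced to contain $(C_1 \cup C_2) - \{x\}$. Exploiting unique maximality together with the circuit property of $C_1$ or $C_2$ should then yield a contradiction. For (SC4), if an admissible $P$ with $|P| < |B|$ spanned some $y \in E_{\pm n} - (P \cup P^*)$, there would exist $J \in \mathcal{C}$ with $J \subseteq P \cup \{y\}$; again an admissible ordering placing $P$ and $y$ high should force a basis to contain $J$, contradicting $J \in \mathcal{C}$.

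For the converse direction, with $\mathcal{C}$ satisfying (SC1)--(SC4) and $\mathcal{B}$ defined as the maximal admissible subsets containing no member of $\mathcal{C}$, I would verify the symplectic basis axiom in two stages: (a) all elements of $\mathcal{B}$ have the same cardinality, and (b) for every admissible ordering, $\mathcal{B}$ has a unique $<$-maximum. For (a), supposing $B_1, B_2 \in \mathcal{B}$ with $|B_1| < |B_2|$, apply (SC4) with $P = B_1$ to conclude $B_1$ spans nothing outside $B_1 \cup B_1^*$; then produce an admissible element $y \in E_{\pm n} - (B_1 \cup B_1^*)$ (extracted from $B_2$ after correcting any $\{i, i^*\}$ conflicts via (SC3)) such that $B_1 \cup \{y\}$ remains independent, contradicting the maximality of $B_1$. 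For (b), assuming two distinct $<$-maxima $B_1, B_2$, one would apply (SC3) to circuits contained in $B_i \cup \{y\}$ for $y$ the $<$-largest element of the symmetric difference, producing a basis that strictly $<$-exceeds $\max(B_1, B_2)$.

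The hardest part is expected to be the admissible-ordering constructions underlying (SC3) in the forward direction and the unique-maximality step of the converse. An admissible ordering is not an arbitrary linear order but one constrained by $i < j \Rightarrow j^* < i^*$, so elements cannot be freely positioned without simultaneously positioning their $*$-images at the opposite end; this interaction with the circuit-elimination swap is precisely where the proof departs from classical matroid theory. A closely related subtlety is ensuring in the converse that the augmentation of $B_1$ is itself admissible, i.e., introduces no $\{i, i^*\}$ pair, which will require careful bookkeeping combining (SC3) with the span condition of (SC4).
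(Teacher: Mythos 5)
Your proposal has two genuine gaps, one in each half of the forward direction. The more serious one is a misreading of (SC4): ``$P$ spans $E_{\pm n}-P\cup P^*$'' means $P$ spans \emph{every} element of that set (for each such $x$ there is $J\in\mathcal{C}$ with $J-P=\{x\}$), and (SC4) asserts only that a too-small $P$ cannot span all of them simultaneously. Your argument instead tries to show that $P$ spans no single element $y$, which is false in general (take $P$ to contain a circuit minus one point), and the mechanism you propose cannot work anyway: an admissible ordering placing $P$ and $y$ high forces the maximal basis to contain $P\cup\{y\}$ only when that set is independent, whereas $J\subseteq P\cup\{y\}$ makes it dependent, and no basis ever contains a member of $\mathcal{C}$ by definition. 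The paper's proof (Theorem~\ref{theorem1}) is a different and more delicate argument: assuming a minimal $P$ with $|P|<|B|$ that spans all of $E_{\pm n}-P\cup P^*$, it deletes a suitable $z\in P$ and uses strong circuit elimination to show that $S=P-\{z\}$ or $S\cup\{z^*\}$ is a maximal independent set of cardinality less than $|B|$, contradicting equicardinality of bases. The same misreading appears in your converse Claim (a), but there it is harmless once corrected: (SC4) supplies \emph{some} unspanned $y\notin B_1\cup B_1^*$, and then $B_1\cup\{y\}$ is admissible and independent, contradicting maximality of $B_1$; no extraction of $y$ from $B_2$ and no appeal to (SC3) is needed.

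For (SC3) in the forward direction your sketch is missing its engine. Placing $(C_1\cup C_2)-\{x\}$ at the top of an admissible ordering does force the unique maximal basis $B_0$ to contain it (granting it is independent), but this yields no contradiction by itself: $B_0$ simply omits $x$, and ``the circuit property of $C_1$ or $C_2$'' gives nothing further. The contradiction the paper actually uses is the \emph{uniqueness} of the fundamental circuit (Lemma~\ref{lemma1}): if $(C_1\cup C_2)-\{x\}$ were independent it would lie in a basis $B$ with $x\notin B$, and then the admissible set $B\cup\{x\}$ would contain the two distinct circuits $C_1$ and $C_2$, contradicting uniqueness. That uniqueness statement is the key lemma and must itself be established for symplectic matroids (the paper imports the ordinary-matroid argument, which rests on the basis exchange property); nothing in your outline supplies it. Your converse direction is essentially the paper's route --- equicardinality from (SC4), then the symmetric exchange property via fundamental circuits and (SC2)/(SC3), then exchange implies the maximality property --- and your flagging of the admissibility constraints is apt, but as written both halves of the forward direction need to be reworked.
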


\begin{Rem}
(SC1), (SC2) and (SC3) resemble the circuit axioms of ordinary matroids. However, they don't suffice to guarantee the equi-cardinality of bases of symplectic matroids. (SC4) guarantees  the equi-cardinality of bases.
\end{Rem}

\begin{Ex}
Let $\mathcal{B} = \{\{1, 2, 3\}, \{1^*, 2^*, 3\}, \{1, 3, 4\}, \{2^*, 3, 4\} \}$. Then 
\[ \{ \mathcal{C} = \{3^*\}, \{4^*\}, \{1^*, 2\}, \{1, 2^*\}, \{1^*, 4\}, \{2, 4\} \}\]
is the collection of minimal admissible subsets not contained in any member of $\mathcal{B}$. Meantime, $\mathcal{B}$ is the collection of maximal admissible subsets not containing members of $\mathcal{C}$.

We can check that $\mathcal{C}$ satisfies (SC1), (SC2) and (SC3) without much obstacle. For any admissible set $P$ with $|P| = 4$, it contains some $C \in \mathcal{C}$. For any admissible set $Q = \{a ,  b \}$ where $a,b \in [4] \cup [4]^*$, $Q$ doesn't span $E_{\pm 4} - Q \cup Q^*$.
\end{Ex}

\section{Symplectic matroids satisfying circuit axioms}
\label{sec4}

Throughout this section, $\mathcal{B}$ is the collection of bases of a symplectic matroid $M$, and $\mathcal{C}$ is the collection of minimal admissible subsets of $E_{\pm n}$ not contained in any member of $\mathcal{B}$.
\begin{Lemma}
\label{lemma1}
Let $B \in \mathcal{B}$, and some $x \notin B$ such that $B \cup \{ x \}$ is admissible. Then there exists a unique circuit $C \subseteq B \cup \{x\}$ where $C$ is given by 
\[C=\{x\} \cup \{b \in B \mid B \cup \{x\} - \{b\} \in \mathcal{B}\}.\]
\end{Lemma}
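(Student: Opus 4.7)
Set $K := \{b \in B : (B - \{b\}) \cup \{x\} \in \mathcal{B}\}$ and $C_0 := \{x\} \cup K$. My plan is to establish existence, uniqueness, and the explicit formula simultaneously by proving that every circuit $C^* \subseteq B \cup \{x\}$ equals $C_0$. Existence is a size count: since $|B \cup \{x\}| = |B|+1$ exceeds the common cardinality of every basis, the admissible set $B \cup \{x\}$ lies in no basis, so by the definition of $\mathcal{C}$ it contains at least one circuit. Any such circuit must contain $x$, for otherwise it would be a circuit inside the basis $B$.

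For the inclusion $C_0 \subseteq C^*$: if $b \in K$ but $b \notin C^*$, then $C^* \subseteq (B - \{b\}) \cup \{x\} \in \mathcal{B}$, contradicting that no circuit sits inside a basis; combined with $x \in C^*$ this gives $C_0 \subseteq C^*$. For the reverse inclusion $C^* \subseteq C_0$, I must show that whenever $b \in C^* - \{x\}$, the admissible set $B' := (B - \{b\}) \cup \{x\}$, which has the correct cardinality $|B|$, is itself a basis. The enabling observation is that every admissible set containing no circuit is contained in some basis --- otherwise, by minimality, it would itself contain a circuit --- and then equi-cardinality forces an admissible circuit-free set of size $|B|$ to be a basis. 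So it suffices to verify that $B'$ contains no circuit, which amounts to proving that $C^*$ is the unique circuit inside $B \cup \{x\}$.

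Uniqueness is the main obstacle. The natural tool, circuit elimination, is exactly axiom (SC3), whose proof is the point of this section and therefore cannot be presupposed. My plan is to derive uniqueness directly from the defining maximality property of the symplectic matroid. If two distinct circuits $C^* \neq C'$ both lived inside $B \cup \{x\}$, both would contain $x$, and the symmetric difference $C^* \Delta C' \subseteq B$ would be a nonempty admissible subset of a basis. The idea is to construct an admissible linear ordering $<$ of $E_{\pm n}$ --- respecting the reversal rule $j^* < i^*$ whenever $i < j$ --- that places $x$, $x^*$ and the elements of $C^* \Delta C'$ in positions forcing two different admissible $|B|$-subsets of $B \cup \{x\}$ to be simultaneously $<$-maximal among the bases, contradicting uniqueness of the $<$-maximum required by the symplectic matroid definition. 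The delicate point is engineering such an ordering while maintaining the reversal condition, and this is where the hyperoctahedral (rather than merely symmetric) structure of the setting does the essential work.
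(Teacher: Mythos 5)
Your existence argument and the inclusion $C_0 \subseteq C^*$ are correct and match what the paper does implicitly; your reduction of the reverse inclusion to uniqueness (via ``circuit-free admissible sets of size $|B|$ are bases'') is also sound, using only the definition of $\mathcal{C}$ and the equi-cardinality of bases. You have also correctly identified the logical constraint that the paper's later circuit-elimination results (Lemma~\ref{lemma2}, hence (SC3)) depend on this lemma and so cannot be invoked here. But that is precisely where your proposal stops being a proof: the uniqueness step, which you yourself call ``the main obstacle,'' is only a plan. You never construct the admissible ordering, and the strategy as described appears to point in the wrong direction. If $C^*$ and $C'$ are two distinct circuits inside $B \cup \{x\}$, then for any $b \in C^* \Delta C'$ the set $B \cup \{x\} - \{b\}$ still contains one of the two circuits and hence is \emph{not} a basis; the only $|B|$-subsets of $B \cup \{x\}$ that can be bases are $B$ itself and $B \cup \{x\} - \{b\}$ for $b$ lying in the intersection of \emph{all} circuits of $B \cup \{x\}$. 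So having two distinct circuits shrinks, rather than enlarges, the supply of bases inside $B \cup \{x\}$, and there is no evident way to manufacture ``two different admissible $|B|$-subsets of $B \cup \{x\}$ that are simultaneously $<$-maximal.'' Any contradiction with the Maximality Property would have to involve bases outside $B \cup \{x\}$, which your sketch does not address.

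For comparison: the paper's own proof is also thin at exactly this point. It establishes existence by the same cardinality count and then asserts that ``the expression of the unique circuit $C$ and its proof for symplectic matroids is the same as those for ordinary matroids,'' citing Minieka. The standard ordinary-matroid argument for uniqueness of the fundamental circuit uses circuit elimination (or basis exchange), neither of which is available a priori for symplectic matroids defined by the Maximality Property; so the difficulty you ran into is genuine and is not resolved by the paper either. Nevertheless, as a self-contained proof of the stated lemma, your proposal has a gap at its central step.
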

\begin{proof}
Let $B \in \mathcal{B}$ and $x \notin B$ such that $B \cup \{x\}$ is admissible. Then $|B \cup \{x\}| > |B|$. Therefore, $B \cup \{x\}$ is dependent, which means $B \cup \{x\}$ contains a circuit. Since $\{b \in B \mid B \cup \{x\} - \{b\} \in \mathcal{B}\} \subseteq B$ and $B \cup \{x\}$ is admissible, so $C$ is definitely admissible. 

The expression of the unique circuit $C$ and its proof for symplectic matroids is the same as those for ordinary matroids, which can be found in various papers or textbooks, for example, in \cite{minieka}.
\end{proof}

\begin{Lemma}
\label{lemma2}
Let $C_1$ and $C_2$ be two distinct circuits of M, $C_1 \cup C_2$ be admissible and $x \in C_1 \cap C_2$. Then for every $c\in C_1 \Delta C_2$, there exists some $C_c \in \mathcal{C}$ such that $c \in C_c \subseteq C_1 \cup C_2 - \{x\}$.
\end{Lemma}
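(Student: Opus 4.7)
The plan is to reduce Lemma~\ref{lemma2} to repeated applications of the circuit elimination property (SC3), which I would establish separately (or recover from the hypothesis $C_1 \cup C_2$ admissible via the arguments of Section~\ref{sec4}). Without loss of generality assume $c \in C_1 \setminus C_2$; the case $c \in C_2 \setminus C_1$ is symmetric after swapping the roles of $C_1$ and $C_2$. The goal is to produce a circuit $C_c$ containing $c$ and lying inside $(C_1 \cup C_2) \setminus \{x\}$.

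First I would apply (SC3) to $C_1$ and $C_2$ at $x$, obtaining a circuit $C_3 \subseteq (C_1 \cup C_2) \setminus \{x\}$. If $c \in C_3$ we are done. Otherwise the preliminary observation is that $C_3 \not\subseteq C_2$: if it were, then because $x \in C_2 \setminus C_3$ we would have $C_3 \subsetneq C_2$, violating the minimality axiom (SC2). Hence $C_3$ meets $C_1 \setminus C_2$ at some element $c'$, and necessarily $c' \neq c$. I would then apply (SC3) to $C_1$ and $C_3$ at $c'$, obtaining a circuit $C_4 \subseteq (C_1 \cup C_3) \setminus \{c'\}$. If $c \in C_4$ and $x \notin C_4$, we are done. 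If $x \in C_4$, one further application of (SC3) to $C_4$ and $C_2$ at $x$ returns us to a circuit inside $(C_1 \cup C_2) \setminus \{x\}$; the process iterates if $c$ is still absent.

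To force this iteration to terminate I would set up an extremal choice at the outset: among all circuits $D \in \mathcal{C}$ with $D \subseteq (C_1 \cup C_2) \setminus \{x\}$ and $c \notin D$, choose one minimizing $|D \cap (C_1 \setminus (C_2 \cup \{c\}))|$, and then show that the two-step elimination above produces a strictly smaller such circuit, yielding a contradiction. The main obstacle is verifying that this invariant actually decreases through the composite operation, since eliminating $x$ a second time (using $C_2$) can reintroduce elements of $C_2 \setminus C_1$ and obscures the bookkeeping. A cleaner alternative I would keep in reserve uses Lemma~\ref{lemma1}: if $(C_1 \cup C_2) \setminus \{x, c\}$ can be extended to a basis $B$ with $c \notin B$ and $c^* \notin B$ (admissibility is plausible here because $c^* \notin C_1 \cup C_2$ by admissibility of $C_1 \cup C_2$), then Lemma~\ref{lemma1} furnishes a unique circuit in $B \cup \{c\}$ containing $c$, and the remaining task is to confine it to $(C_1 \cup C_2) \setminus \{x\}$. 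Showing that such a basis $B$ exists admissibly for symplectic matroids is the delicate point in this alternative route.
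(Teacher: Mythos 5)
Your reduction of weak elimination to Lemma~\ref{lemma1} (first paragraph) matches the paper's first step, but the rest of the argument has a genuine gap exactly where you flag it: the termination of the iteration is the entire content of the lemma once weak elimination is known, and your proposed invariant does not close it. If $D$ is chosen to minimize $|D \cap (C_1 \setminus (C_2 \cup \{c\}))|$ and you eliminate $c'$ from $C_1$ and $D$, the resulting circuit lives in $(C_1 \cup D)\setminus\{c'\}$ and may therefore contain elements of $C_1 \setminus C_2$ that were \emph{not} in $D$; the subsequent elimination of $x$ against $C_2$ only shrinks the set further into $C_1 \cup C_2$ but gives no control over which elements of $C_1\setminus C_2$ survive. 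So nothing prevents the composite operation from producing a circuit meeting $C_1 \setminus (C_2 \cup \{c\})$ in a \emph{larger} set than $D$ did, and the extremal argument does not go through as stated. The reserve route via Lemma~\ref{lemma1} is also not a repair: extending $(C_1 \cup C_2)\setminus\{x,c\}$ to a basis presupposes that this set is independent, which fails in general, and even when a suitable basis $B$ exists the unique circuit of $B \cup \{c\}$ may use elements of $B$ outside $C_1 \cup C_2$, so ``confining it'' is again the whole problem.

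The paper's proof supplies the missing control by inducting on $|C_1 \cup C_2|$ and invoking the \emph{strong} statement (the inductive hypothesis), not merely (SC3), on strictly smaller admissible unions. With its choice $c \in C_2 \setminus C_1$: take a circuit $C \subseteq (C_1\cup C_2)\setminus\{x\}$ with $c \notin C$, pick $y \in (C \cap C_1)\setminus C_2$, and apply the inductive hypothesis to $C, C_1$ at $y$ with \emph{target} $x$ to get a circuit containing $x$ inside $(C\cup C_1)\setminus\{y\}$; then apply it again to that circuit and $C_2$ at $x$ with target $c$. The key point is that the inductive hypothesis lets you prescribe which element the intermediate circuit contains, which is precisely what iterated (SC3) cannot do; the strict decrease in $|C_1\cup C_2|$ is automatic because $c \notin C\cup C_1$ at the first step and $y$ is absent at the second. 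Admissibility of all intermediate unions is free since everything stays inside $C_1 \cup C_2$. If you want to keep an extremal formulation you would need a potential that provably decreases under the composite move; absent that, the induction above is the standard and shortest repair.
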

\begin{proof}
Suppose $C_1 \cup C_2 - \{x\}$ is independent. Then $C_1 \cup C_2 - \{x\} \subseteq B$. We know $x \notin B$, otherwise $C_1 \subseteq B$. Hence, $C_1,C_2 \subseteq C_1 \cup C_2 \subseteq B \cup \{x\}$. $B \cup \{x\}$ is dependent because B is a basis, and $B \cup \{x\}$ is admissible. Thus $B \cup \{x\}$ contains a unique circuit by Lemma~\ref{lemma1}. That contradicts $C_1$ and $C_2$ being distinct. Thus $C_1 \cup C_2 - \{x\}$ is dependent.

Since we suppose $C_1 \cup C_2$ is admissible, we show the existence of such a circuit $C_c$. This proof resembles that in \cite{borov}. We proceed by induction on $|C_1 \cup C_2|$. For the base step of induction, consider $C_1 = \{c_1,x\}$ and $C_2 = \{c_2,x\}$. Then $C=\{c_1,c_2\}=C_1 \Delta C_2$ must be a circuit. For the inductive step, let $c \in C_2 - \ C_1$ without the loss of generality. We have shown that there exists a circuit $C \subseteq (C_1 \cup C_2) - \{x\}$. Suppose $c \notin C$. Since $C \not \subseteq C_2$, there exists some $y \in (C \cap C_1) - C_2$. We notice $x \in C_1 - C$, but $c \notin C \cup C_1$. Thus, $C \cup C_1 \subset C_1 \cup C_2$ and we can apply the induction hypothesis to $C$, $C_1$, and $x, y$ to find a circuit $C_3$ with $x \in C_3 \subseteq (C \cup C_1) - \{y\}$. Since $y \notin C_2$ and $y \notin C_3$, we have $C_3 \cup C_2 \subset C_1 \cup C_2$. However, $x \in C_2 \cap C_3$ and $c \in C_2 - C_3$. Thus, by applying the induction hypothesis again, we get a circuit $C_c$ with $c \in C_c \subseteq (C_3 \cup C_2) - \{x\} \subseteq (C_1 \cup C_2) - \{x\}$.
\end{proof} 

\begin{Th}
\label{theorem1}
Let $P$ be an admissible subset of $E_{\pm n}$ and $B \in \mathcal{B}$. If $|P| < |B|$,  $P$ does not span $E_{\pm n}-P \cup P^*$.
\end{Th}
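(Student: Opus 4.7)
The plan is to argue by contradiction: assume $P$ spans every element of $Q := E_{\pm n} - (P \cup P^*)$, and then derive a circuit contained in some basis $B'$, which contradicts $B'$ being independent. The main technical step is to show that the spanning property of $P$ is inherited by any maximal independent subset $P_0 \subseteq P$; once this is in hand, the rest is a cardinality comparison between $|B| - |P_0|$ and $|(P - P_0)^*|$.

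First I would fix a maximal admissible subset $P_0 \subseteq P$ containing no circuit (so $P_0 = P$ if $P$ is already independent). For each $p \in P - P_0$, maximality forces $P_0 \cup \{p\}$ to contain a circuit $C_p$, and $C_p$ must contain $p$; in particular $P_0$ spans every element of $P - P_0$. The main sub-claim is that $P_0$ also spans every $y \in Q$. Given a circuit $J \subseteq P \cup \{y\}$ with $y \in J$, I would iteratively prune elements of $J \cap (P - P_0)$: picking $p$ in this intersection, Lemma~\ref{lemma2} applied to $J$ and $C_p$ at their common element $p$ (note that $P \cup \{y\}$ is admissible because $y \in Q$, and that $y \in J - C_p$ because $C_p \subseteq P$ while $y \notin P$) yields a circuit $J' \ni y$ with $J' \subseteq (J \cup C_p) - \{p\}$. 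Since $C_p - \{p\} \subseteq P_0$, this strictly decreases $|J \cap (P - P_0)|$, so after finitely many iterations one obtains a circuit inside $P_0 \cup \{y\}$ containing $y$, as needed.

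With the sub-claim in hand, $P_0$ is admissible and contains no circuit, so $P_0$ is contained in some basis $B'$. Since $|P_0| \leq |P| < |B'|$, the set $B' - P_0$ is nonempty. Admissibility of $B'$ excludes $P_0^*$ from $B' - P_0$, and independence of $B'$ rules out $P - P_0$ (any $p \in (B' - P_0) \cap (P - P_0)$ would put the circuit $C_p \subseteq P_0 \cup \{p\} \subseteq B'$). Hence $B' - P_0 \subseteq (P - P_0)^* \cup Q$. If $B' - P_0 \subseteq (P - P_0)^*$, then $|B| - |P_0| \leq |P| - |P_0|$, contradicting $|P| < |B|$. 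So some $x \in B' - P_0$ lies in $Q$; the sub-claim then produces a circuit inside $P_0 \cup \{x\} \subseteq B'$, the desired contradiction.

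The main obstacle I anticipate is the sub-claim, since it is the only step that uses the circuit-elimination axiom (Lemma~\ref{lemma2}) in a nontrivial way; the remaining steps are routine bookkeeping with admissibility and cardinalities. The crucial subtlety is that Lemma~\ref{lemma2} must preserve $y$ itself through the elimination, which is exactly why the lemma is stated in the strong form that guarantees a circuit through every $c \in C_1 \Delta C_2$ rather than merely some circuit in $(C_1 \cup C_2) - \{x\}$.
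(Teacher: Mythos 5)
Your proof is correct, and it takes a genuinely different (and in some respects tighter) route than the paper's. The paper also argues by contradiction and also relies on Lemma~\ref{lemma2}, but it first assumes $P$ is a \emph{minimal} spanning set, reduces to the case where $P$ is itself a circuit (dismissing the case where $P$ properly contains a circuit as ``similar''), deletes a carefully chosen element $z \notin J_{n^*}$ to form $S = P - \{z\}$, shows every admissible one-element extension of $S$ other than $S \cup \{z^*\}$ is dependent, and finishes with a two-case analysis showing that either $S$ or $S \cup \{z^*\}$ would be a maximal independent set of cardinality $k-2$ or $k-1$. You instead pass to a maximal independent subset $P_0 \subseteq P$, use Lemma~\ref{lemma2} iteratively to show $P_0$ inherits the spanning property (the decreasing quantity $|J \cap (P - P_0)|$ makes this induction clean), extend $P_0$ to a basis $B'$, and observe that $B' - P_0$ has nowhere admissible and independent to live. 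This buys you two things: it handles uniformly the case the paper waves away (when $P$ is dependent but not a circuit, there is no single $z$ whose removal leaves an independent set), and it replaces the paper's delicate endgame on $S \cup \{z^*\}$ with a transparent cardinality count on $B' - P_0 \subseteq (P-P_0)^* \cup Q$. Both arguments use the equi-cardinality of bases, which is available here since $\mathcal{B}$ is by definition the basis collection of a symplectic matroid. The one point worth making explicit in your write-up is why an independent set is contained in a basis: if an admissible set lies in no member of $\mathcal{B}$, it contains a minimal such set, i.e.\ a circuit, so independent sets are exactly the subsets of bases.
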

\begin{proof}
Suppose there exists some $P$ such that $|P| < |B| = k$ and $P$ is the minimal set that spans $E_{\pm n}-P \cup P^*$, which means no subset $P_0$ of $P$ spans $E_{\pm n}-P_0 \cup P_0^*$. Without the loss of generality, suppose $P = \{ 1 , 2, \ldots, k-1 \}$. Hence $P$ spans every element in $\{ k, k+1,\ldots,n \} \cup \{ k, k+1,\ldots,n \}^*$. Thus there exist some $J_{k+j} \in \mathcal{C}$ such that
\[ J_{k+j} - P  = \{ k+j \}  \]
for all $j = 0 , \ldots, n-k$, and $J_{(k+j)^*} \in \mathcal{C}$ such that
\[ J_{(k+j)^*} - P  = \{ (k+j)^* \} \]
for all $j = 0 , \ldots, n-k$. However, $P$ cannot be independent because $P \cup \{x \}$ is always dependent for any $x \in E_{\pm n}-P \cup P^*$, which makes $P$ a basis of size $k-1$, a contradiction. So $P$ is dependent. Thus $P \not \subseteq J_{k+j}$ nor $P \not \subseteq J_{(k+j)^*}$ for all $j$.  

Suppose $P$ is a circuit. (The proof when $P$ contains a circuit is similar.)  There exists some $z \in P$ such that $z \notin J_{n^*}$. Let $S : = P - \{ z \}$.  Then $J_{n^*} - \{ n^* \} \subseteq S$. Thus $S \cup \{ n^* \}$ is dependent because $J_{n^*} \subseteq S \cup \{ n^* \}$. For any $x \in E_{\pm n}-P \cup P^*$ and $x \neq n$, if $z \in J_x$, then by Lemma~\ref{lemma2}, there exists some $C \subseteq J_x \cup P - \{ z \}$, which means $S \cup \{ x \}$ is dependent; if $z \notin J_x$, then $J_x \subseteq S \cup \{ x \}$, which means $S \cup \{ x \}$ is dependent. Hence $S \cup \{ x \}$ is always dependent for all $x \in E_{\pm n}-P \cup P^*$. Moreover, $S \cup \{ z \}$ is dependent.

We are left with $S \cup \{ z^* \}$. Suppose $S \cup \{ z^* \}$ is dependent. Then $S$ is maximally independent, and hence a basis. However, we have $|S| = k-2$, which contradicts $|B| = k$. Suppose $S \cup \{ z^* \}$ is independent. Then $S \cup \{ z^* \}$ is maximally independent, and hence a basis. However, we have $|S \cup \{ z^* \}| = k-1$, which contradicts $|B| = k$. Therefore, there exists no $P$ such that $|P| < |B|$ and $P$ spans $E_{\pm n}-P \cup P^*$.
\end{proof}

Below we state the \textit{Symmetric Exchange Axiom}. 

For every $X,Y \in \mathcal{B}$, if $i \in Y - X$, then there exists a $j \in X-Y$ such that $X \cup \{i\} - \{j\} \in \mathcal{B}$.

We show this Symmetric Exchange Axiom leads to the Maximality Property of symplectic matroids.

\begin{Th}
\label{theorem2}
If $\mathcal{B}$ is a collection of admissible sets of cardinality $k$ in $[n] \cup [n]^\ast$ where $k \leq n$, then the Symmetric Exchange Axiom guarantees the Maximality Property.
\end{Th}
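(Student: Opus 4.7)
The plan is to fix an arbitrary admissible ordering $<$ on $E_{\pm n}$ and verify the Maximality Property in two steps: existence of a maximal element of $\mathcal{B}$ under the induced partial order on $E_k$, and uniqueness of that maximal element. Existence is immediate because $\mathcal{B}$ is a non-empty finite family of admissible $k$-subsets, so the induced finite subposet must contain at least one maximal element, and no appeal to the Symmetric Exchange Axiom is needed here.

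For uniqueness, I would argue by contradiction. Suppose $X, Y \in \mathcal{B}$ are two distinct maximal elements. Let $z$ denote the $<$-largest element of $X \Delta Y$; after possibly swapping the roles of $X$ and $Y$, I may assume $z \in Y - X$. Applying the Symmetric Exchange Axiom with $i = z$ produces some $j \in X - Y$ such that $X' := (X \cup \{z\}) - \{j\} \in \mathcal{B}$; in particular $X'$ is automatically admissible and of cardinality $k$. Since $j \in X - Y \subseteq X \Delta Y$ and $z$ is the $<$-maximum of $X \Delta Y$, one has the key inequality $j < z$.

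The core of the argument is then to show that $X < X'$ in the induced partial order, which will contradict the maximality of $X$. Writing $X$ in increasing order as $x_1 < x_2 < \cdots < x_k$ with $j = x_m$, let $\ell$ denote the position that $z$ occupies in the sorted form of $X'$. Because $z > j = x_m$, a direct case analysis shows $\ell \geq m$ and that the sorted sequence of $X'$ satisfies $x'_t = x_t$ for $t < m$ and for $t > \ell$, $x'_t = x_{t+1} > x_t$ for $m \leq t < \ell$, and $x'_\ell = z > x_\ell$. Hence $x'_t \geq x_t$ componentwise with strict inequality at $t = \ell$, giving $X < X'$ and the desired contradiction.

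The main obstacle lies in the bookkeeping of this last step: one must track how deletion of $j$ and insertion of $z$ interact in the sorted sequence, and the inequality $j < z$, coming directly from the choice of $z$ as the $<$-maximum of $X \Delta Y$, is precisely what forces the shift to occur in the direction that raises $X$. Once this componentwise comparison is in place, the contradiction with maximality of $X$ is immediate, completing the proof of the Maximality Property from the Symmetric Exchange Axiom.
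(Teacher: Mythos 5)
Your proposal is correct and follows essentially the same route as the paper: pick two distinct maximal members, take the largest element $z$ of $X\Delta Y$ with $z\in Y-X$, apply the Symmetric Exchange Axiom to obtain $X'=(X\cup\{z\})-\{j\}\in\mathcal{B}$, and use $j<z$ to conclude $X<X'$, contradicting maximality. You simply make explicit the componentwise sorting argument that the paper asserts in one line.
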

\begin{proof}
This proof resembles that in \cite{borov}. Assume $\mathcal{B}$ satisfies the Symmetric Exchange Axiom. X and $X \cup \{i\} - \{j\}$ must be comparable because the ordering of $E_{\pm n}$ is total. Suppose X, Y are two distinct maximal bases. Let $i$ be the maximal element of $X \Delta Y$. Without the loss of generality, suppose $i \in Y$. Then there exists some $j \in X$ such that $X \cup \{i\}-\{j\} \in \mathcal{B}$. We know X and $X \cup \{i\} - \{j\}$ are comparable and distinct. Since $X$ is maximal and $i$ is the maximal element in $X \Delta Y$, then $X \cup \{i\} - \{j\}$ is greater than $X$. This causes a contradiction. Therefore, the Symmetric Exchange Axiom induces the Maximality Property.\\
\end{proof}

\section{Circuit Axioms leading to symplectic matroids}
\label{sec5}

Now we prove the other direction of the main theorem. Lemma~\ref{lemma1}, Lemma~\ref{lemma2}, and Theorem~\ref{theorem1} already told us that when $\mathcal{B}$ is the collection of bases of a symplectic matroid, then (SC1) - (SC4) hold. Now suppose $\mathcal{C}$ is a collection satisfying axioms (SC1) - (SC4) and $\mathcal{B}$ the collection of maximal admissible subsets of $E_{\pm n}$ not containing members of $\mathcal{C}$. We prove the following claims.

\noindent \textbf{Claim 1}\\
The bases in $\mathcal{B}$ are equi-numerous.
 
Suppose $B_1 , B_2 \in \mathcal{B}$ such that $|B_1| < |B_2|$. By Axiom (SC4), there exists an $x \in E_{\pm n} - B_1 \cup B_1^*$ that $B_1$ doesn't span. Then $B_1 \cup \{ x \}$ is admissible and meantime contains no circuit, which contradicts the maximality of the basis $B_1$.

\noindent \textbf{Claim 2}\\
Let $B \in \mathcal{B}$, and some $x \notin B$ such that $B \cup \{ x \}$ is admissible. Then there exists a unique circuit $C \in B \cup \{x\}$ where $C$ is given by 
\[C=\{x\} \cup \{b \in B \mid B \cup \{x\} - \{b\} \in \mathcal{B}\}. \]

To prove Claim 2, we let $x \notin B$ such that $B \cup \{ x \}$ is admissible. Then there exists some $D \in \mathcal{C}$ such that $D \subseteq B \cup \{ x \}$. If $\{ x \} \in \mathcal{C}$, then we are done. Otherwise let  
\[C=\{x\} \cup \{b \in B \mid B \cup \{x\} - \{b\} \in \mathcal{B}\}. \]
We want to show $C = D$.

Since $D \not \subset B$, we know $x \in D$. Now let $y \in D - \{ x \}$. Then $y \in B$. Let $A := B \cup \{ x \} - \{ y \}$. Suppose, for contradiction, that $A$ contains some circuit $E \in \mathcal{C}$. For sure $x \in E$. If $E$ and $D$ are distinct, then by Axiom (SC3), there exists some circuit $F$ such that $F \subseteq E \cup D - \{ x \}$. But then $F \subseteq B$, a contradiction. Hence $E = D$. However $y \notin E$, $y \in D$. Thus we reach a contradiciton.

Hence $A \in \mathcal{B}$. So $y \in C$ and $D \subseteq C$. To show $C \in \mathcal{C}$, we must show $C - \{ z\}$ is independent for all $z \in C$. If $z = x$, then $C - \{z \} \subseteq B \in \mathcal{B}$. Otherwise $C - \{ z \} \subseteq B \cup \{x \} - \{z \}$, which is a member of $\mathcal{B}$ by the definition of $C$. Therefore $C,D \in \mathcal{C}$ and by Axiom (SC2), we have $D = C$. Claim 2 is proved.

Let $A, B \in \mathcal{B}$ with $a \in A - B$. We show that there exists $b \in B-A$ such that $B \cup \{ a\} - \{ b\} \in \mathcal{B}$. Claim 2 says that there exists a circuit $C \in B \cup \{a\}$ such that 
\[C - \{a\} =  \{b \in B \mid B \cup \{a\} - \{b\} \in \mathcal{B}\}. \]
However, $C - \{a\}$ is never empty because otherwise $C \subseteq A$ and is thus independent, which leads to a contradiction. So the Symmetric Exchange Property is satisfied here, which leads to the Maximality of symplectic matroids by Theorem~\ref{theorem2}.

\section{From graphs to symplectic matroids}
\label{sec6}

In this section, a \textit{graph} refers to a finite undirected multigraph. Inspired by Theorem 2 in \cite{chow}, we apply Theorem~\ref{thm:main} to see how every graph gives rise to a symplectic matroid.

Let $G$ be a graph with $n$ edges $e_1, e_2, \ldots, e_n$. We define a family $\mathcal{C}(G)$ of admissible
subsets of $E_{\pm n}$ as follows. If $S \subseteq E_{\pm n}$ is admissible, let
\[ G(S) : = \{ e_i \mid i \in S \textrm{ or } i^* \in S \}.
\]
We let an admissible set $S$ be a member of $\mathcal{C}(G)$ if and only if 
\begin{enumerate}
\item either $G(S)$ is a (single) cycle and there is an even number of edges $e_i$ in $G(S)$ such that $i^* \in S$ (The parity of $G(S)$ is the product of the signs of these edges and is thus positive);
\item or $G(S)$ is a union of (single) cycles, there is an even number of edges $e_i$ in $G(S)$ such that $i^* \in S$, and in each cycle there is an odd number of edges with negative signs.
\end{enumerate}

We use some notions and terms from \cite{Zas}, which we review now. A \textit{signed graph} is a graph with each edge given either a plus sign or a minus sign. A
cycle in a signed graph is \textit{balanced} if the product of the signs of the corresponding edges is positive
and is \textit{unbalanced} otherwise. Every signed graph $\Gamma$ gives rise to an ordinary matroid
$M(\Gamma)$ in the following manner. The ground set of $M(\Gamma)$ is the signed edge
set of $\Gamma$, and a set of edges is independent if every connected
component is either a tree or a unicyclic graph whose unique cycle is unbalanced. \cite[Theorem~5.1]{Zas} shows that
$M(\Gamma)$ is a matroid. Notice that a basis of $M(\Gamma)$ can have as many elements as $G$ has vertices, but not more.

To phrase this another way, our construction of $\mathcal{C}(G)$ is the union of all $M(\Gamma)$ as $\Gamma$ ranges over all $2^n$ signed graphs with underlying graph $G$. 

\begin{Th}
For every graph $G$, $\mathcal{C}(G)$ is the collection of circuits of a symplectic matroid.
\end{Th}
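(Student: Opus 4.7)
The strategy is to invoke Theorem~\ref{thm:main}: it suffices to check that the family $\mathcal{C}(G)$ satisfies the four axioms (SC1)--(SC4). Axioms (SC1) and (SC2) are essentially bookkeeping. For (SC1), $G(\emptyset)$ has no edges, so it is neither a single cycle nor a union of cycles. For (SC2), if $C_1 \subseteq C_2$ both lie in $\mathcal{C}(G)$, then admissibility of $C_2$ forces the signs on the shared edges of $G(C_1) \subseteq G(C_2)$ to agree, and a short case analysis on the types (1)/(2) shows that a proper subgraph of a single cycle cannot be a cycle, while a proper subset of a disjoint union of unbalanced cycles either fails the even parity count on the number of cycles or ceases to be a union of cycles entirely.

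The heart of the proof is axiom (SC3). Given distinct $C_1, C_2 \in \mathcal{C}(G)$ with $C_1 \cup C_2$ admissible and $x \in C_1 \cap C_2$, I plan to study the symmetric difference $D = G(C_1)\,\Delta\,G(C_2)$, with signs inherited from the (well-defined) admissible union. Since each $G(C_i)$ is $2$-regular (a disjoint union of simple cycles), every vertex of $D$ has even degree, so $D$ decomposes into edge-disjoint simple cycles $\gamma_1, \ldots, \gamma_m$. The edge $e_{|x|}$ lies in both $G(C_1)$ and $G(C_2)$, hence outside $D$, so $D \subseteq G((C_1 \cup C_2) - \{x\})$. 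A parity check --- the total number of negative edges in each $C_i$ is even, so the same holds for $D$ --- shows that either some $\gamma_j$ is balanced, yielding a type-(1) circuit of $\mathcal{C}(G)$ inside $(C_1 \cup C_2) - \{x\}$, or every $\gamma_j$ is unbalanced and the number of such cycles is even, in which case a vertex-disjoint pair extracted from the decomposition is a type-(2) circuit of $\mathcal{C}(G)$ contained in $(C_1 \cup C_2) - \{x\}$.

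For axiom (SC4), I would first pin down the cardinality of a basis by translating independence in our matroid into a signed-subgraph condition on $G(B)$ (no balanced cycle, no disjoint pair of unbalanced cycles) and applying Zaslavsky's rank formula for signed-graph matroids \cite{Zas} as a guide. Given an admissible $P$ with $|P| < |B|$, I argue by contradiction: if $P$ spans every $y \in E_{\pm n} - (P \cup P^{*})$, then for each such $y$ there is a circuit $J_y$ with $J_y - P = \{y\}$, which translates to a cycle-closing condition in $G$ for $e_{|y|}$ against $G(P)$. Counting the resulting independent cycle relations against the rank of $G$ forces $|P| \ge |B|$, contradicting the hypothesis.

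The main obstacle is the vertex-disjointness step in (SC3). When $D$ has a vertex of degree $4$, its Eulerian decomposition need not produce a vertex-disjoint pair of cycles, and the candidate type-(2) circuit may fail the definition. Overcoming this will require either a careful choice of Eulerian decomposition splitting the high-degree vertices in the right way, or exploiting admissibility of $C_1 \cup C_2$ to isolate a balanced sub-cycle and fall back on the type-(1) alternative; this sign-and-decomposition bookkeeping is the most delicate part of the argument.
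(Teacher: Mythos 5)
Your overall strategy --- verifying (SC1)--(SC4) and invoking Theorem~\ref{thm:main} --- is the same as the paper's, and your treatment of (SC3) is actually more careful than the paper's: the paper asserts that $C_1 \cup C_2 - C_1 \cap C_2$ is a single cycle, whereas you correctly observe that the symmetric difference $D$ is only an even subgraph and must be decomposed into edge-disjoint cycles, with the parity count (the number of negative edges of $D$ is even, hence the number of unbalanced cycles in the decomposition is even when none is balanced) finishing the job. The vertex-disjointness worry you raise is a genuine ambiguity in the definition of $\mathcal{C}(G)$, but if ``union of (single) cycles'' is read as an edge-disjoint union then you may simply take all of $D$ as the type-(2) member and no extraction of a disjoint pair is needed; neither you nor the paper settles which reading is intended, but you at least flag the issue.

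The genuine gap is in (SC4). Your argument is a rank count: each spanned $y$ gives a cycle-closing condition, so every edge of $G$ lies in the closure of $G(P)$ in the ordinary cycle matroid $M(G)$, whence $|P| \geq r(G)$, the rank of $M(G)$. But the bases of this symplectic matroid have cardinality $r(G)+1$ whenever $G$ contains a cycle (an independent set may have exactly one unbalanced unicyclic component, and all other components forests), so the count falls short by exactly one and does not yield the contradiction $|P| \geq |B|$. The missing idea --- the one step the paper's proof does contain --- is a sign-parity obstruction: once $|P| = r(G)$ and $G(P)$ spans $M(G)$, the set $G(P)$ is a spanning forest, the unsigned cycle closed by $e_{|y|}$ against $G(P)$ is the unique fundamental cycle, and the balance condition then determines the sign of $y$ uniquely; hence $P$ can span at most one of $y$ and $y^{*}$, contradicting the assumption that it spans all of $E_{\pm n} - (P \cup P^{*})$. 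Your phrase ``independent cycle relations'' may be gesturing at this, but as written the counting alone does not close the gap; you need the parity step explicitly, and you also need to address the case where $G(P)$ is not a forest (e.g.\ $G(P)$ a single unbalanced cycle), where uniqueness of the closing cycle fails and a separate argument is required.
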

\begin{Rem}
The symplectic matroid we construct from graph $G$ in terms of circuits is the same matroid constructed differently in terms of independent sets by Theorem 2 in \cite{chow}.
\end{Rem}

\begin{proof}
It is easy to check that members of $\mathcal{C}(G)$ satisfy (SC1) and (SC2).  Let $C_1 , C_2 \in \mathcal{C}(G)$ and suppose $C_1 , C_2$ are single cycles. If $C_1 \cap C_2 \neq \emptyset$ and suppose $e_1 \in C_1 \cap C_2$, there definitely exists a cycle $C_3 = C_1 \cup C_2 - C_1 \cap C_2 \subseteq C_1 \cup C_2 - \{ e_1 \}$. For any $e \in C_1 \cap C_2$, the deletion of such an edge doesn't change the parity of $C_3$ because we delete it twice from $C_1$ and $C_2$. Thus there is an even number of negative edges in $C_3$. If either or both of $C_1$ and $C_2$ are unions of (single) cycles, the proof would be analogous. 

\cite[Theorem~5.1]{Zas} shows that
$M(\Gamma)$ is a matroid whose set of edges is independent if every connected
component is either a tree or a unicyclic graph. Notice that a basis of $M(\Gamma)$  cannot have more elements than $G$ has vertices. Therefore if an admissible subset $P$ of $E_{\pm n}$ satisfies $|P| < |B|$, then $|P| < \# V(G) - 1$. In other words, $G(P)$ is a subset of a spanning tree in $G$. Therefore if $P$ spans $x \in E_{\pm n} - P \cup P^*$,  there exists a unique $J \in \mathcal{C}(G)$ such that $J - P = \{ x \}$. Considering the parity of $J$, $P$ is not able to span $x^*$ at the same time. Thus $P$ does not span $E_{\pm n} - P \cup P^*$.

Therefore $\mathcal{C}(G)$ is the collection of circuits of a symplectic matroid.
\end{proof}

\section{Acknowledgements} 

I wish to thank Ed Swartz for many helpful discussions and corrections.  I also thank Timothy Chow for his multiple suggestions, especially on rephrasing Theorem~\ref{thm:main}.

\bibliography{mybibfile}

\end{document}